\documentclass[a4paper,12pt,scrbook]{amsart}

\usepackage{fancybox}
\usepackage{fancyhdr} 
\usepackage{epigraph}
\usepackage{blindtext}
\usepackage{float}
\usepackage{amsmath,amssymb,amsthm}  	
\usepackage{eurosym}
\usepackage{graphicx} 				
\usepackage{epstopdf}
\usepackage[T1]{fontenc}
\usepackage[utf8]{inputenc}  			
\usepackage{polynom}
\usepackage{siunitx}
\usepackage{color}
\usepackage{lmodern}
\usepackage[headheight=20pt, headsep=40pt]{geometry}
\usepackage{nicefrac}
\usepackage{mathrsfs}
\usepackage{mathtools}

\def \ii{{\relax\ifmmode \mathrm{i} \else i \fi}}
\def \ee{{\relax\ifmmode \mathrm{e} \else e \fi}}
\def \dd{{\relax\ifmmode \mathrm{d} \else d \fi}}
\def \g{{\relax\ifmmode \mathrm{g} \else g \fi}}
\def \lg{{\relax\ifmmode \mathrm{lg} \else lg \fi}}

\def \N{\mathbb N}
\def \Z{\mathbb Z}

\newcommand*{\faktor}[2]{
  \raisebox{0.2\height}{\ensuremath{#1}}
  \mkern-3mu\diagup\mkern-3mu
  \raisebox{-0.2\height}{\ensuremath{#2}}
}
\renewcommand{\lg}[2]{\mathrm{lg}\left( \faktor{#1}{#2}\right)}

\theoremstyle{plain}
\newtheorem{satz}{Theorem}[section]

\newtheorem{lemma}[satz]{Lemma} 
\newtheorem{corollary}[satz]{Corollary}
\newtheorem{proposition}[satz]{Proposition}
\newtheorem*{wilf}{Wilf's Question}

\theoremstyle{definition}

\newtheorem{notiz}[satz]{Remark} 
\newtheorem{rem}[satz]{Remark}

\usepackage{hyperref}		

\title{Numerical Semigroups with $a_e = 2g+1$}   

\author{Michael Hellus}
\email{Michael.Hellus@mathematik.uni-regensburg.de}
\address{NWF I -Mathematik, Universität Regensburg,
93040 Regensburg, Germany}
\subjclass[2020]{11D07, 13H10, 13F20, 20M14}
\keywords{Numerical semigroups, Wilf's question, parametrization of curve singularities}

\author{Reinhold H\"ubl}
\email{Reinhold.Huebl@dhbw.de}
\address{Zentrum f\"ur mathematisch-naturwissenschaftliches Basiswissen,
DHBW Mannheim, 68163 Mannheim, Germany}

\author{Anton Rechenauer}
\email{AntonRechenauer@googlemail.com}
\address{Regensburg, Germany}

\date{\today}

\begin{document}

\maketitle
\centerline{\textit{Dedicated to the memory of Rolf Waldi}}

\begin{abstract}
This article discusses numerical semigroups having a generator which is 
as large as possible. This turns out to be $2g+1$, where $g$ is the 
genus of the semigroup. We will show that these semigroups are closely 
related to symmetric semigroups and have interesting symmetry properties 
themselves. Furthermore we will show that Wilf's question has a positive 
answer for these semigroups and some semigroups derived thereof.
\end{abstract}

\section{Introduction}\label{sect:intro}

Numerical semigroups have been studied intensely since their first 
appearance in~\cite{Sy}, nevertheless many questions about them are still 
wide open. One particularly intriguing problem is a question of Wilf concerning 
the ratio of gaps and sporadic elements of such a semigroup (cf.~\cite{Wi}): 

For each such numerical semigroup $S$ let $F(S)$ be its Frobenius element, 
i.e. the largest integer not in $S$, let $g(S)$ be its genus, i.e. the number of 
positive integers not in $S$ (called the gaps of $S$) and let $e(S)$ be the 
number of minimal generators of $S$ (the embedding dimension of $S$). 

\begin{wilf}
For each numerical semigroup $S \neq \N$, is it true that
	\[ \frac {g(S)}{F(S)+1} \leq \frac {e(S)-1}{e(S)} \,\, ? \]
\end{wilf}

If $\sigma(S)$ denotes the number of integers $n < F(S)$ that 
are in $S$ (called the sporadic elements of $S$), then the question 
may be reformulated as

\begin{wilf}
For each numerical semigroup $S \neq \N$, is it true that 
	\[ F(S)+1 \leq e(S) \cdot \sigma(S) \,\, ? \]
\end{wilf}
 
As $F(S)+1 = \sigma(S) + g(S)$, this provides a relation between 
the number of gaps and the number of sporadic elements of a 
numerical semigroup. Though this question has a positive answer 
in a number of cases, it is still wide open in general 
(see~\cite{De} for a survey). 

In this note we study semigroups having a generator ``as large as 
possible'' which turns out to be $2 \cdot g(S)+1$. We show that 
these classes of semigroups have interesting symmetry properties and 
are closely related to symmetric semigroups, though with the exception 
of the multiplicity $2$ case they are not symmetric themselves. We also 
show that for these semigroups and some semigroups derived from them 
the answer to Wilf's question is positive. 

Numerical semigroups in general are closely related to curve singularities 
as they naturally appear as value semigroups of unibranch curve 
singularities. The semigroups studied here bear a close relationship  
to the classification of curve singularities by parametrization as 
curve singularities whose value semigroups satisfy $a_e = 2g+1$ show 
that the bounds on the $\mathcal{L}$--determinacy,  obtained 
in~\cite{HS}, are sharp in general.  

Parts of the research for this paper were done while the second author 
was visiting Purdue University. He expresses his gratitude to this 
institution for its support and hospitality. 

\section{Maximal possible generator}\label{sect:basic}

Throughout this paper $S = \langle a_1, \ldots, a_e\rangle$ 
will be a numerical semigroup with $a_1 < a_2 < \cdots < a_e$, 
embedding dimension $e$ (so that $a_1$, $\ldots$, $a_e$ are 
the minimal generators of $S$) and multiplicity $m = a_1$. By 
$g=g(S)$ we denote its genus and by $f = F(S)$ its Frobenius 
element. 

\begin{lemma}\label{lem:an_genus}
In the above situation
	$$ a_e \leq 2 g + 1 $$
\end{lemma}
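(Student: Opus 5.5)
We argue directly from the fact that $a_e$ is a minimal generator. Since $a_e$ is minimal, it cannot be written as $s + t$ with $s, t \in S\setminus\{0\}$. Consider the pairs $(i, a_e - i)$ for $i = 1, \ldots, a_e - 1$. For each such $i$, at least one of $i$ and $a_e - i$ must be a gap of $S$; otherwise $a_e = i + (a_e - i)$ would be a sum of two nonzero elements of $S$, contradicting minimality.

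Next we group the integers $1, \ldots, a_e-1$ into the unordered pairs $\{i, a_e - i\}$ with $1 \le i \le \lfloor (a_e-1)/2 \rfloor$. When $a_e$ is even there is also the singleton $\{a_e/2\}$, which must itself be a gap, since otherwise $a_e = a_e/2 + a_e/2$. These pairs are pairwise disjoint subsets of $\{1, \ldots, a_e - 1\}$. Each pair contains at least one gap, so
\[ g \;\geq\; \left\lceil \frac{a_e - 1}{2} \right\rceil \;\geq\; \frac{a_e-1}{2}, \]
which rearranges to $a_e \leq 2g + 1$.

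The only care needed is to check that distinct pairs are really disjoint, so that the gaps counted are distinct, and to treat the midpoint when $a_e$ is even. The case $S = \N$ is trivial: there $a_e = a_1 = 1 = 2\cdot 0 + 1$. In the general case $a_e \ge 2$, and all gaps found lie below $a_e$. The argument does not even need $e \ge 2$; it uses only that $a_e$ is a minimal generator.

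For later use, equality $a_e = 2g+1$ forces $a_e$ to be odd. It also forces every gap to be smaller than $a_e$, and exactly one element of each pair $\{i, 2g+1-i\}$ to be a gap. This is the symmetry property the paper presumably exploits next.
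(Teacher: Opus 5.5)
Your proof is correct and is essentially the paper's argument. The paper proves the contrapositive form, that every $\lambda \geq 2g+2$ lies in $S_+ + S_+$, using the same pigeonhole count over the $\lfloor \lambda/2 \rfloor \geq g+1$ pairs $\{i, \lambda - i\}$; you apply that count directly to $\lambda = a_e$, and since you only use $a_e \notin S_+ + S_+$, your argument gives the paper's stronger formulation (which it reuses later) as well.
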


\begin{proof} 
It suffices to show that any $\lambda \in\N$ with $\lambda  \geq 2 \cdot g + 2$ is 
contained in $S_+ + S_+$. 

Let $t = \lfloor \frac {\lambda}{2} \rfloor$. Then $t \geq g + 1$, and we have 
$t$ pairwise distinct presentations  
	$$ \begin{array} {l c l c l}
	\lambda & = & 1 & + & (\lambda - 1) \\
	\lambda & = & 2 & + & (\lambda - 2) \\
	& \vdots & & & \\
	\lambda & = & t & + & (\lambda - t) \\
	\end{array} $$
As $t > g$ for at least one equation
	$$ \lambda  = i  + (\lambda - i) $$
neither $i$ nor $\lambda-i$ can be gaps of $S$, i.e. both $i$ and $\lambda - i$ are in 
$S_+$, proving the lemma. 
\end{proof}

\begin{proposition}\label{prop:ae_maximal} 
Let $S = \langle a_1, \ldots , a_e \rangle$ be a numerical 
semigroup of genus $g$. Then the following are equivalent: 
\begin{enumerate}
    	\item $a_e = 2g+1$.
    	\item The assignment $\varphi$ given by  $\varphi(n) = 2g+1 - n$ 
	induces a well defined bijection $\varphi: \{1, \ldots, 2g\} \cap S 
	\longrightarrow \{1, \ldots, 2g \} \setminus S$.   
\end{enumerate}
\end{proposition}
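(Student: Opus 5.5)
Both directions rest on one observation. Put $N = 2g+1$. For $1 \le n \le 2g$ the numbers $n$ and $N-n$ also lie in $\{1,\ldots,2g\}$, so $\varphi$ is an involution of this set. The set contains exactly $g$ gaps (every gap is at most $f \le 2g-1$), and therefore also exactly $g$ elements of $S$.

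\emph{(1) $\Rightarrow$ (2).} Assume $a_e = 2g+1$. Since $a_e$ is a minimal generator, it cannot be written as $s + s'$ with $s, s' \in S_+$. Take $n \in S \cap \{1,\ldots,2g\}$. If $2g+1-n$ were in $S$, it would lie in $S_+$, and then $a_e = n + (2g+1-n)$ would belong to $S_+ + S_+$, which is impossible. Hence $\varphi$ maps $\{1,\ldots,2g\}\cap S$ into $\{1,\ldots,2g\}\setminus S$. The map $\varphi$ is injective, being the restriction of an involution. Both sets have cardinality $g$, so $\varphi$ is a bijection.

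\emph{(2) $\Rightarrow$ (1).} Assume $\varphi$ is a well-defined bijection as in (2). By Lemma~\ref{lem:an_genus} we have $a_e \le 2g+1$. Every integer $\ge 2g$ lies in $S$, since $f \le 2g-1$, so it suffices to show that $2g+1 \notin S_+ + S_+$. Then $2g+1$ is an element of $S$ that is not a sum of two nonzero elements, hence a minimal generator, and so $a_e \ge 2g+1$.

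Suppose instead that $2g+1 = s + s'$ with $s, s' \in S_+$. Then $1 \le s \le 2g$, so $s \in \{1,\ldots,2g\}\cap S$. By (2), $\varphi(s) = s' = 2g+1-s$ is a gap, which contradicts $s' \in S$.

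The only point requiring care is the counting: $\{1,\ldots,2g\}$ contains all $g$ gaps. For this I use the standard bound $f \le 2g-1$, which holds because for each $s \in S$ with $s < f$ the number $f-s$ is a gap. Also, in the direction (2) $\Rightarrow$ (1) one only needs that $\varphi$ maps elements of $S$ to gaps; bijectivity is not used. Beyond this I expect no real obstacle.
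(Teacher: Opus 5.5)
Your proof is correct and follows essentially the paper's argument: both pair $n$ with $2g+1-n$ and use that $2g+1$ is not in $S_+ + S_+$ exactly when no such pair lies entirely in $S$. The one difference is how you show that all $g$ gaps lie in $\{1,\dots,2g\}$: you import the standard bound $F(S)\le 2g-1$, whereas the paper obtains it from a pigeonhole count over the $g$ pairs in (1)$\Rightarrow$(2) and from the bijectivity of $\varphi$ in (2)$\Rightarrow$(1).
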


\begin{proof}
First assume $a_e = 2g+1$. We get $g$ equations 
	$$\begin{array} {l c l c l}
	a_e & = & 1 & + & (a_e - 1) \\
	a_e & = & 2 & + & (a_e - 2) \\
	& \vdots & & & \\
	a_e & = & g & + & (a_e - g) \\
	\end{array} $$
As $a_e \notin S_++S_+$ in each equation at least one 
summand must be not in $S$. 
As there are only $g$ gaps, the existence and bijectivity 
of $\varphi$ follows. 

Assume now that a 
bijection $\varphi: \{1, \ldots, 2g\} \cap S 
\longrightarrow \{1, \ldots, 2g \} \setminus S$
is given by the assignment $\varphi$. Then 
for each $1 \leq n \leq g$ in the representation 
$2g+1 = n + (2g+1-n)$ exactly 
one summand is in $S$ and one is not. Thus $2g+1 \notin 
S_+ + S_+$. On the other hand, all $g$ gaps of $S$ are 
contained in $\{1, \ldots, 2g\}$ by the bijectivity of 
$\varphi$, hence $2g+1 \in S$. Thus it is a minimal 
generator of $S$, and by Lemma~\ref{lem:an_genus} 
necessarily the last one. 
\end{proof}

\begin{proposition}\label{prop:an_alm_symm}
Let  $S = \langle a_1, \ldots, a_e\rangle$ be a numerical semigroup of genus $g$. Then 
$a_e = 2g+1$ if and only if $S' = S \setminus \{ a_e\}$ is a symmetric semigroup.

Moreover in case $a_e = 2g+1$ we have: 
If $e=2$, then $a_1 = 2$ and $S' = \langle 2, a_e+2 \rangle$ and if 
$e > 2$, then $S' = \langle a_1, \ldots, a_{e-1}\rangle$. In both cases we have 
$m(S') = m(S)$. 
\end{proposition}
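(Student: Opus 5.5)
Since $a_e$ is a minimal generator, $S' = S \setminus \{a_e\}$ is closed under addition: a sum of two elements of $S'$ with at least one positive summand is either an element of $S_+ + S_+$ or lies in $S'$ trivially, and $a_e \notin S_+ + S_+$. So $S'$ is a numerical semigroup, and its gaps are those of $S$ together with $a_e$. Hence $g(S') = g+1$. I will use the standard characterization: a numerical semigroup $T$ is symmetric iff $F(T) = 2g(T)-1$, equivalently iff $x \in T \Leftrightarrow F(T)-x \notin T$ for all integers $x$.

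For the implication $a_e = 2g+1 \Rightarrow S'$ symmetric: by Proposition~\ref{prop:ae_maximal} all gaps of $S$ lie in $\{1,\dots,2g\}$, so $F(S') = a_e = 2g+1 = 2(g+1)-1 = 2g(S')-1$, and $S'$ is symmetric. Conversely, if $S'$ is symmetric, then $F(S') = 2g(S')-1 = 2g+1$. I claim $F(S') = a_e$: every integer larger than $F(S)$ lies in $S$, and the only element of $S$ missing from $S'$ is $a_e$. So $F(S') = \max(F(S), a_e)$, and this maximum is $a_e$ because $a_e \in S$ and $F(S) \notin S$. Thus $a_e = 2g+1$. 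This step, identifying $F(S')$ with $a_e$, is the only point needing care, and it is easy.

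For the generators of $S'$ when $a_e = 2g+1$: if $e > 2$, then $a_1 < a_e$ and $a_1, \dots, a_{e-1} \in S'$. Any element of $S'$ is a sum of generators of $S$ not using $a_e$, because a positive element of $S$ involving $a_e$ with further summands is larger than $a_e$, while $a_e$ alone is excluded. So any such sum lies in $\langle a_1,\dots,a_{e-1}\rangle$. Minimality is inherited from $S$, so $S' = \langle a_1,\dots,a_{e-1}\rangle$ and $m(S') = a_1$.

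If $e = 2$, then $S = \langle a_1, 2g+1\rangle$ has genus $(a_1-1)\cdot 2g/2 = (a_1-1)g$. Equating this with $g$ forces $a_1 = 2$ (note $g \geq 1$, since $S \neq \N$ because $a_e > a_1$ exists). Then $S = \langle 2, 2g+1\rangle$, and $S' = \langle 2\rangle \cup \{n \geq 2g+3\}$ is generated by $2$ and $2g+3 = a_e + 2$. Its multiplicity is $2 = m(S)$.
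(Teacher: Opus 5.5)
The main gap is in the case $e>2$. Your claim that ``any element of $S'$ is a sum of generators of $S$ not using $a_e$'' does not follow from the fact that representations involving $a_e$ produce elements larger than $a_e$. Elements such as $a_e+a_1$ or $2a_e$ do lie in $S'$, and nothing you say shows they have a representation that avoids $a_e$. Your argument never uses $a_e=2g+1$, so it would apply to every semigroup, and there the conclusion is false. For $S=\langle 4,5,6\rangle$ we have $11=5+6\in S\setminus\{6\}$, but $11=F(\langle 4,5\rangle)\notin\langle 4,5\rangle$.

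The missing idea is that $S'$ has no minimal generator $\geq a_e$; this is the content of Lemma~\ref{lem:sym_max_gen}, which the paper uses here. By Lemma~\ref{lem:an_genus} applied to $S'$, every minimal generator of $S'$ is at most $2g(S')+1=a_e+2$, and $a_e\notin S'$. Write $a_e+1=m+(a_e+1-m)$ and $a_e+2=m+(a_e+2-m)$. Both $a_e+1-m$ and $a_e+2-m$ lie in $S'$ because $S'$ is symmetric with Frobenius number $a_e$ and $m-1$, $m-2$ are gaps. That last point needs $m\geq 3$, which holds because $m\geq e>2$. Only then do you know that $S'$ is generated by its elements below $a_e$. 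These are exactly the elements of $S$ below $a_e$, and so they lie in $\langle a_1,\dots,a_{e-1}\rangle$.

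There is also a faulty justification in the converse direction. The sentence ``this maximum is $a_e$ because $a_e\in S$ and $F(S)\notin S$'' only shows $a_e\neq F(S)$, not $a_e>F(S)$; for $\langle 4,5,6\rangle$ one has $F(S)=7>6=a_e$. You have to use the hypothesis: $F(S')=2g+1$ while $F(S)\leq 2g-1$, so $\max(F(S),a_e)$ cannot equal $F(S)$. This is exactly the paper's argument.

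The remaining parts are correct and essentially agree with the paper:
\begin{itemize}
\item that $S'$ is a numerical semigroup with $g(S')=g+1$;
\item the forward implication;
\item the case $e=2$ via the genus formula $(a_1-1)g=g$.
\end{itemize}
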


\begin{proof}
Assume $a_e = 2g+1$. 
As $a_e$ is a minimal generator of $S$, the set $S' = S \setminus \{ a_e\}$ certainly 
is a numerical semigroup with $g(S') = g+1$ and $F(S') = 2 g(S')-1 = 2g+1$. Hence 
$g(S') = \frac {F(S')+1}{2}$, implying that $S'$ is symmetric. 

Conversely suppose the $S' =S \setminus\{a_e\}$ is symmetric. Then 
$F(S') = 2g(S')-1 = 2g+1$.
If $a_e \neq F(S')$, then $2g+1 = F(S') = F(S) \leq 2g$, a contradiction.

If $e = 2$, $S = \langle m, l \rangle$ with $l \geq m+1 $, then $2 g+1 = 
(m-1) \cdot (l-1) + 1$ by the formula for the Frobenius element in 
numerical semigroups generated by two elements, hence $2g+1>l$ 
except in case $m = 2$ in which case $2g+1 = l$ 
and $S' = \langle a_1, l+2\rangle = \langle 2, a_e+2 \rangle$, 
and in case $e >2$ the additional statement is a 
consequence of the following lemma.  
\end{proof}

\begin{lemma}\label{lem:sym_max_gen}
Let $S'$ be a symmetric numerical semigroup with multiplicity $m \geq 3$. Then 
$S'$ is generated by its elements less than $F(S')$. 
\end{lemma}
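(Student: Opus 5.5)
To prove the lemma we must show that every minimal generator $a$ of $S'$ satisfies $a < F(S')$. Put $f = F(S')$ and $m = m(S') \geq 3$. A minimal generator never exceeds $f + m$, because any $n > f+m$ can be written as $n = m + (n-m)$ with $n - m > f$, so $n-m \in S'$. Since $f \notin S'$, it remains to exclude minimal generators $a$ with $f < a \leq f+m$, i.e.\ of the form $a = f + k$ with $1 \leq k \leq m$.

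The central step is to use symmetry to split such an $a$. Recall that $S'$ symmetric means: for every integer $x$, exactly one of $x$ and $f - x$ lies in $S'$. Fix $a = f + k$ with $1 \le k \le m$. For $k = m$ we have $a = m + f$, but this is not yet a decomposition, since $f \notin S'$. So we look for a splitting $a = s + t$ with $s, t \in S'\setminus\{0\}$. A natural attempt is to take $s$ to be a nonzero element of $S'$ that is smaller than $k$ in a suitable sense. In general, however, we have $k \le m$, so $k$ itself is not in $S'$ unless $k = m$. I therefore argue differently: for $1 \le k \le m-1$ the integer $k$ is a gap, so by symmetry $f - k \in S'$. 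Now write $a = f + k = (f - k) + 2k$. This gives a decomposition whenever $2k \in S'$, but that need not hold.

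A better approach is to use the gap $k$ together with a suitable element. Consider the numbers $a - m = f + k - m$ and $m$. If $f + k - m \in S'$ we are done, since $m \geq 3 > 0$ and $a - m > 0$ (because $f \ge m-1$ and $k \geq 1$). Otherwise $f + k - m \notin S'$, and symmetry gives $f - (f+k-m) = m - k \in S'$. Since $0 < m - k < m$ when $1 \le k \le m-1$, this contradicts the minimality of $m$. Hence for $1 \le k \le m-1$, $a - m \in S'$ and $a$ is not minimal. This leaves only $k = m$, i.e.\ $a = f + m$.

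The case $a = f+m$ is the main obstacle, and it is exactly where $m \geq 3$ enters. For instance, $\langle 2, 2g+3\rangle$ has the generator $f + 2$, which shows the hypothesis cannot be dropped. I plan to write $f + m = (f + 1) + (m-1)$ and use the other generators. Since $m \geq 3$, the integers $1$ and $2$ are both gaps, so by symmetry $f - 1$ and $f - 2$ lie in $S'$. Then
\[
f + m = (f-1) + (m+1) = (f-2) + (m+2).
\]
If $m+1 \in S'$ or $m+2 \in S'$, we are done, since $f-1, f-2 > 0$ (note $f \ge m-1 \ge 2$, and $f - 2 = 0$ would force $f = 2$, $m = 3$, where $f + m = 5 = 2 + 3$ fails). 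To handle this uniformly, I instead take the smallest element $s \in S'$ with $m < s < m + m$ that is not a multiple of $m$. Such an $s$ exists unless $S' \subseteq m\N \cup [f+1,\infty)$, and the case $S' = \langle m \rangle \cup \ldots$ can be treated directly. Writing $s = m + j$ with $1 \le j \le m-1$, the integer $j$ is a gap, so $f - j \in S'$, and $f + m = (f - j) + (m + j)$, with both summands positive provided $f > j$. The remaining degenerate cases, where $f - j = 0$ or no such $s$ exists below $f$, must be checked by hand using $m \geq 3$. This step is where I expect the real care to be needed.
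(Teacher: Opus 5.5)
Your argument for generators $a=f+k$ with $1\le k\le m-1$ is fine; it is the paper's symmetry argument, which the paper needs only for $k=1,2$. The gap is the case $a=f+m$, which you correctly identify as the crux but never prove. The identities $f+m=(f-1)+(m+1)=(f-2)+(m+2)$ help only if $m+1$ or $m+2$ lies in $S'$, and your uniform replacement does not always apply. Take the symmetric semigroup $S'=\langle 3,7\rangle$, with $f=11$. Here $4,5\notin S'$, so there is no $s\in S'$ with $m<s<2m$. This is so even though $7\in S'$ is a non-multiple of $m$ below $f$, so your criterion for when such an $s$ exists is also wrong. Saying the rest is ``checked by hand'' leaves exactly the hard case open.

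The missing idea is the one the paper uses. Since $S'$ is symmetric, $2g(S')=f+1$, so by Lemma~\ref{lem:an_genus} every minimal generator is at most $2g(S')+1=f+2$. Because $m\ge 3$, the element $f+m\ge f+3$ is never a minimal generator, and only $k=1,2$ need your symmetry argument.

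Alternatively, you can stay within your own approach by dropping the restriction $s<2m$. Let $x$ be the least element of $S'$ not divisible by $m$. Then $x-m\notin S'$, so $f+m-x\in S'$ by symmetry. Since $m\ge 3$, the at least two consecutive integers $f+1,\dots,f+m-1$ all lie in $S'$ and include a non-multiple of $m$. Hence $x<f+m$, and $f+m=x+(f+m-x)$ with both summands positive.

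A smaller point: your claim $a-m>0$ does not follow from $f\ge m-1$ and $k\ge1$, which give only $a-m\ge 0$. You must exclude $f=m-1$, and this holds because for $m\ge 3$ both $1$ and $f-1=m-2$ would then be gaps, contradicting symmetry. Your aside about $f=2$, $m=3$ concerns $\langle 3,4,5\rangle$, which is not symmetric, so it is irrelevant.
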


\begin{proof}
As $S'$ is symmetric, $F(S') + 1 = 2g(S')$. Thus by Lemma~\ref{lem:an_genus} 
any element $s>F(S')+2$ cannot be part of a minimal generating set of $S'$. But by 
symmetry  (and as $m > 2$) $F(S')+1-m = F(S')-(m-1) \in S'$ and $F(S')+2-m 
= F(S')-(m-2) \in S'$, hence all generators are smaller than $F(S')$.
\end{proof}

\begin{rem}
The symmetric numerical semigroup 
$S'\coloneqq S\setminus\{a_e\}$ in Proposition \ref{prop:an_alm_symm}
is derived from an arbitrarily given numerical 
semigroup $S$ with $a_e=2g+1$ (notation from above). In fact this 
construction establishes a one-to-one correspondence between numerical 
semigroups $S$ having $a_e=2g+1$ and symmetric numerical semigroups $S'$, 
the inverse operation is given by $S\coloneqq S'\cup\{F(S')\}$, where $F(S')$ 
denotes the Frobenius number of $S'$.

This is obviuos by Proposition \ref{prop:an_alm_symm} and 
Lemma~\ref{lem:sym_max_gen}.
\end{rem}

\begin{corollary}\label{cor:an_frob}
Let $S = \langle a_1, \ldots, a_e\rangle$ be a numerical semigroup with multiplicity $m$ 
and with $a_e = 2g+1$. Then 
	$$ F(S) = a_e - m = 2g+1 - m $$
\end{corollary}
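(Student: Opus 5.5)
We prove $F(S) = 2g+1-m$ by showing two things: $2g+1-m$ is a gap, and every integer larger than it lies in $S$. The main tool is the bijection $\varphi$ of Proposition~\ref{prop:ae_maximal}.

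\emph{Step 1: $2g+1-m$ is a gap.} Since $S$ is a numerical semigroup different from $\N$, we have $m \geq 2$, and $m \leq 2g$. Indeed, by Lemma~\ref{lem:an_genus} the smallest generator is at most $a_e = 2g+1$, and $m = 2g+1$ would force $e=1$, which is impossible for $S \neq \N$. Also $2g+1-m \geq 1$. Hence $m \in \{1,\dots,2g\} \cap S$. By Proposition~\ref{prop:ae_maximal}, $\varphi(m) = 2g+1-m$ lies in $\{1,\dots,2g\}\setminus S$, so it is a gap. This gives $F(S) \geq 2g+1-m$.

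\emph{Step 2: no integer $n > 2g+1-m$ is a gap.} Gaps are at most $2g$ (a standard fact, also implied by the bijectivity of $\varphi$). So it suffices to treat $n$ with $2g+1-m < n \leq 2g$. For such $n$ we have $1 \leq 2g+1-n < m$, hence $2g+1-n \notin S$, because it is a positive integer smaller than the multiplicity. The map $\varphi$ is a bijection whose inverse is again $n' \mapsto 2g+1-n'$, so it interchanges elements and gaps in $\{1,\dots,2g\}$. Since $2g+1-n$ is a gap, its partner $n$ must lie in $S$. Thus every integer larger than $2g+1-m$ belongs to $S$.

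Combining the two steps gives $F(S) = 2g+1-m = a_e - m$. The only point needing care is the involutive description in Step 2: the statement of Proposition~\ref{prop:ae_maximal} gives a bijection from elements to gaps, and we need the converse, that a gap $x$ in $\{1,\dots,2g\}$ forces $2g+1-x \in S$. This follows by counting. Both sets have size $g$, since $\{1,\dots,2g\}$ contains exactly the $g$ gaps. Because $\varphi$ is a bijection, every gap $x$ equals $\varphi(s) = 2g+1-s$ for some $s \in S$, so $2g+1-x = s \in S$.
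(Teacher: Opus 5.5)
Your proof is correct and is essentially the argument the paper intends; the paper states the corollary without proof. By Proposition~\ref{prop:ae_maximal}, or equivalently by the symmetry of $S'=S\setminus\{a_e\}$ about $F(S')=2g+1$ from Proposition~\ref{prop:an_alm_symm}, every gap of $S$ has the form $a_e-s$ with $0<s\in S$, so the largest gap is $a_e-m$. You are also right to assume $S\neq\N$ explicitly, which the paper leaves implicit: for $S=\N$ one has $a_e=1=2g+1$ but $F(\N)=-1\neq 0=a_e-m$.
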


Let $S = \langle a_1, \ldots, a_e\rangle$ be a numerical semigroup with 
multiplicity $m = a_1$ and $n \in \N$ be a 
positive integer. The set 
	$$\operatorname{RG}(n, S)=\{L \in \{1,\ldots, n-1\} \vert L \notin S, n-L \notin S\}$$ 
is called the \textbf{set of reflected gaps} of $S$ with respect to $n$. By 
$\operatorname{Ap}(S)$ we denote the Ap\'ery--set of $S$, i.e. 
	$$ \operatorname{Ap}(S) = \{ s \in S \, \vert \,\, s - m \notin S \} $$
Note that $\operatorname{Ap}(S)$ contains exactly $m$ elements, that 
$\operatorname{Ap}(S) \cup \{m\}$ contains all minimal generators of $S$ 
and that for each Pseudo--Frobenius element $p$ of $S$ we have $p + m 
\in \operatorname{Ap}(S)$. Recall that an integer $p$ is called Pseudo--Frobenius 
element of $S$ if $p \notin S$ but $p+s \in S$ for all $s \in S_+$. 

\begin{proposition}\label{prop:gplusm_refelcted}
Let $S = \langle a_1, \ldots, a_e\rangle$ be a numerical semigroup with multiplicity 
$m=a_1$, genus $g = g(S)$ and with Frobenius element $f = F(S)$. Then the 
following are equivalent: 
\begin{itemize}
\item[i)] $a_e = 2 g+1$. 
\item[ii)] $m+\operatorname{RG}(f, S) = \operatorname{Ap}(S) 
\setminus \{0, f+m\}$. 
\item[iii)] $a_e = f+m$ and $\vert \operatorname{RG}(f, S) \vert = m-2$.
\end{itemize}
\end{proposition}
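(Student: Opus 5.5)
The plan is to prove (i)$\Leftrightarrow$(iii) by a counting argument valid for every numerical semigroup, then (i)$\Rightarrow$(ii) via the symmetric semigroup $S'$ of Proposition~\ref{prop:an_alm_symm}, and finally (ii)$\Rightarrow$(iii) by an Ap\'ery-set argument.

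\emph{Counting step.} Let $\sigma$ be the number of elements of $S$ in $[0,f]$. For $x\in[0,f]$ the integers $x$ and $f-x$ cannot both lie in $S$, since $f\notin S$. The elements $x\in[0,f]\cap S$ and their partners $f-x$ therefore account for $2\sigma$ integers of $[0,f]$. Every remaining $x$ has both $x,f-x\notin S$. Such an $x$ cannot be $0$ or $f$, because $0\in S$. Hence the remaining integers are exactly the elements of $\operatorname{RG}(f,S)$, and
\[ f+1 = 2\sigma + \vert\operatorname{RG}(f,S)\vert . \]
Likewise the gaps of $S$ are the $x$ with $f-x\in S$ (there are $\sigma$ of them) together with $\operatorname{RG}(f,S)$, so $g=\sigma+\vert\operatorname{RG}(f,S)\vert$. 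Eliminating $\sigma$ gives $\vert\operatorname{RG}(f,S)\vert = 2g-1-f$.

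\emph{(i)$\Leftrightarrow$(iii).} Under (i), Corollary~\ref{cor:an_frob} gives $f=2g+1-m$, so $a_e=f+m$ and $\vert\operatorname{RG}(f,S)\vert=m-2$. Conversely, $\vert\operatorname{RG}(f,S)\vert=m-2$ forces $f+m=2g+1$, and then $a_e=f+m=2g+1$.

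\emph{(i)$\Rightarrow$(ii).} Assume (i) and let $S'=S\setminus\{a_e\}$, which is symmetric with $F(S')=2g+1=f+m$ by Proposition~\ref{prop:an_alm_symm}. Take $L\in\operatorname{RG}(f,S)$. Then $f-L$ is a gap of $S$ different from $f+m$, hence a gap of $S'$. Symmetry of $S'$ gives $(f+m)-(f-L)=L+m\in S'\subseteq S$. Since $L\notin S$, we get $L+m\in\operatorname{Ap}(S)$. Moreover $L+m\neq 0$, and $L+m\neq f+m$ because $L<f$.

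Note that $f+m\in\operatorname{Ap}(S)$ always holds. Thus $\operatorname{Ap}(S)\setminus\{0,f+m\}$ has exactly $m-2$ elements. By the counting step $\vert\operatorname{RG}(f,S)\vert=m-2$ as well, so the inclusion $m+\operatorname{RG}(f,S)\subseteq\operatorname{Ap}(S)\setminus\{0,f+m\}$ is an equality.

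\emph{(ii)$\Rightarrow$(iii).} The cardinality statement is immediate, since translation by $m$ is injective and $\vert\operatorname{Ap}(S)\setminus\{0,f+m\}\vert=m-2$. It remains to show that $f+m$ is a minimal generator; then Lemma~\ref{lem:an_genus} together with $f+m=2g+1$ (from the counting step) shows it is $a_e$.

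Suppose $f+m=s+t$ with $s,t\in S_+$. If $s-m\in S$, then $f=(s-m)+t\in S$, a contradiction; so $s\in\operatorname{Ap}(S)$, and likewise $t\in\operatorname{Ap}(S)$. Neither equals $0$ or $f+m$. By (ii), $s=L+m$ and $t=L'+m$ with $L,L'\in\operatorname{RG}(f,S)$. Then $f-L=L'+m=t\in S$, contradicting $L\in\operatorname{RG}(f,S)$.

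The main obstacle is this last step: recognising that (ii) must be converted into the minimality of $f+m$ as a generator, and that any decomposition of $f+m\in\operatorname{Ap}(S)$ has both summands in $\operatorname{Ap}(S)$. If that route failed, I would instead verify criterion (2) of Proposition~\ref{prop:ae_maximal} directly, by ruling out pairs $(n,\,2g+1-n)$ of two gaps.
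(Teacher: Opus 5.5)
Your proof is correct, and it takes a different route from the paper's for most of the equivalences.

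\textbf{The paper's route.} The paper assumes $a_e=f+m$ and studies the injection $\iota\colon\operatorname{RG}(f+m,S)\to\operatorname{RG}(f,S)$, $L\mapsto L-m$. It shows two things:
\begin{itemize}
\item the complement of $\operatorname{im}(\iota)$ is exactly $(\operatorname{Ap}(S)\setminus\{0,f+m\})-m$;
\item $a_e=2g+1$ holds iff $\operatorname{RG}(f+m,S)=\emptyset$.
\end{itemize}
It then derives (i)$\Rightarrow$(ii) and (iii)$\Rightarrow$(i) by asking whether $\operatorname{im}(\iota)$ is empty.

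\textbf{Your route.} You use the identity $\vert\operatorname{RG}(f,S)\vert=2g-1-f$, which holds for every numerical semigroup. This turns (i)$\Leftrightarrow$(iii) into a two-line computation. It also lets you prove (i)$\Rightarrow$(ii) by a one-sided inclusion plus a cardinality count; the inclusion comes from the symmetry of $S'$, which is essentially Proposition~\ref{prop:ae_maximal}.

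\textbf{The step (ii)$\Rightarrow$(iii).} Here you follow the paper's argument in slightly leaner form. A decomposition of $f+m$ yields a summand $s\in\operatorname{Ap}(S)\setminus\{0,f+m\}$. Then $s-m\in\operatorname{RG}(f,S)$, while its reflection $f-(s-m)$ is the other summand and lies in $S$, a contradiction. You identify $f+m$ with $a_e$ via Lemma~\ref{lem:an_genus} and $f+m=2g+1$. The paper instead uses that $a_e\le f+m$ always holds.

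\textbf{What each approach buys.} Your route gains transparency. It shows that for any $S$ the numerical condition in (iii) is just $f+m=2g+1$, so (iii) really says that $f+m$ is a minimal generator equal to $2g+1$. It also explains the paper's example $\langle 7,11,16,17,19\rangle$ at once: there $f=20$ and $f+m=27=2g+1$, but $27$ is not a minimal generator. The paper's route gives finer structural information. Whenever $a_e=f+m$, $\operatorname{RG}(f,S)$ splits as $\operatorname{im}(\iota)$ plus the shifted Ap\'ery set, so $\vert\operatorname{RG}(f,S)\vert=\vert\operatorname{RG}(f+m,S)\vert+m-2$. Combined with your identity, this gives $\vert\operatorname{RG}(f+m,S)\vert=2g+1-a_e$ for such $S$.
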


\begin{proof}
First assume that $a_e = f+m$. Then the following holds 

\begin{enumerate}
\item The map $\iota: \operatorname{RG}(f+m, S) \longrightarrow 
\operatorname{RG}(f, S)$ given by $\iota(L) = L-m$ is well--defined (and injective). 

In fact, if $L \in \operatorname{RG}(f+m, S)$, then $f+m-L \notin S$, hence 
$f+m-L  \leq f$, implying $L \geq m$. As $L$ is a gap, necessarily $L > m$ and 
therefore $L-m \in \{1, \ldots, f-1\}$. Certainly $L-m$ itself is a gap as $L$ is, and so is 
$f-(L-m)=f+m-L$. Hence $L-m \in \operatorname{RG}(f, S)$. 
\item $L \in \operatorname{RG}(f, S) \setminus \operatorname{im}(\iota)$ if and only 
if $L + m \in \operatorname{Ap}(S) \setminus \{0, f+m\}$. 

If $L \in \operatorname{RG}(f, S)$ but $L+m \notin  \operatorname{RG}(f+m, S)$, 
then $L$ is a gap and $f-L = (f+m) - (L+m)$ is a gap of $S$, hence necessarily 
$L+m$ is not a gap of $S$, thus $L + m \in \operatorname{Ap}(S)\setminus \{0, f+m\}$. 

Conversely if $L+m  \in \operatorname{Ap}(S) \setminus \{0, f+m\}$, then 
$L+m \in S$ but $L \notin S$. Furthermore $x=f-L = (f+m) - (L+m) \notin S$, as 
otherwise $f+m = x+(L+m) \in S_+$ contradicting the fact that $a_e = f+m$ is a minimal 
generator. Thus $L \in \operatorname{RG}(f, S) \setminus \operatorname{im}(\iota)$. 
\item $a_e = 2g+1$ if and only if $\operatorname{RG}(f+m, S) = \emptyset$

This is immediate by Proposition~\ref{prop:ae_maximal} as $a_e = f+m$.  
\end{enumerate}
\bigbreak

With this we are able to complete the proof of the proposition: 

Assume i). If $a_e = 2g+1$, then $a_e = f+ m$ by 
Corollary~\ref{cor:an_frob}, hence 
$\operatorname{RG}(f+m, S) = \emptyset$ by (3), and therefore 
	$$ m+\operatorname{RG}(f, S) 
	= \operatorname{Ap}(S) \setminus \{0, f+m\} $$ 
by (2), thus ii) holds.
 
Now assume ii), i.e. assume that $m+\operatorname{RG}(f, S) = 
\operatorname{Ap}(S) \setminus \{0, f+m\}$. 
As $\operatorname{Ap}(S) \setminus \{0, f+m\} $ has $m-2$ 
elements, it remains to show that $a_e = f+m$. 

Assume otherwise. Then $a_e < f+m$ (as any $s > f+m$ is in $S_++S_+$) and 
	$$ f+m = \sum\limits_{i=1}^e n_i \cdot a_i $$
with $n_i \geq 0$ and $\sum\limits_{i=1}^e n_i \geq 2$. As $f \notin S$, 
necessarily $n_1 = 0$, i.e. $ f+m = \sum\limits_{i=2}^e n_i \cdot a_i $, and as 
$a_e < f+m$, 
	$$ a_2, \ldots, a_e \in \operatorname{Ap}(S) \setminus \{0, f+m\} $$ 
Let $i_0 = \operatorname{min}\{i \, \vert \,\, n_i > 0\}$ and write 
	$ f+m = a_{i_0} + s $
with 
	$$ s = \left(n_{i_0}-1\right) \cdot a_{i_0} + \sum\limits_{i=i_0+1}^e n_i \cdot a_i 
	\in S_+ $$
As $a_{i_0} - m \in \operatorname{RG}(f, S)$ 
is a reflected gap of $S$ with respect to $f$,  
	$$ s = f + m - a_{i_0} = f - \left(a_{i_0} - m\right) \quad \notin S $$
a contradiction. Hence $a_e = f+m$ and $\vert \operatorname{RG}(f, S) \vert = m-2$ 
and iii) is true.

If iii) holds, i.e. if $a_e = f+m$ and $\vert \operatorname{RG}(f, S) \vert = m-2$, 
then by (2) we have that 
	$$ m + \operatorname{RG}(f, S) \setminus \operatorname{im}(\iota) 
	= \operatorname{Ap}(S) \setminus \{0, f+m\} $$ 
has $m-2$ elements. Hence $\operatorname{im}(\iota) = \emptyset$, thus 
$\operatorname{RG}(f+m, S) = \emptyset$, and therefore $a_e = 2g+1$ by (3), 
hence we get i). 

This completes the proof of the proposition.  

\end{proof}

\begin{notiz}
The condition $a_e = f+m$ is necessary but not sufficient for $a_e = 2g+1$: 

For $m \geq 3$ the numerical semigroups $S = \langle m, f+1, f+2, \ldots f+m\rangle$ 
with $f > m$ and $f$ not a multiple of $m$ do satisfy 
	$$ F(S) = f, \quad a_e = f + m $$
but $a_e = 2 \cdot g(S) + 1$ holds if and only if $f = m+1$. 

In fact if $a_e = 2g+1$, then $q=a_e-a_{e-1}$ is a  Pseudo-Frobenius element 
(see Lemma~\ref{lem:pseudofrob} below), satisfying $q <m$. Hence 
$q+m \in S$, and therefore $f < q+m < 2m$. Thus $g = f-1$, and therefore 
	$$ f+m = a_e = 2g+1 = 2f-1 $$ 
implying $f = m+1$. Conversely $S =  \langle m, m+2, m+3 \ldots 2m+1\rangle$ 
satisfies $g = m$ and $a_e = 2m+1 = 2g+1$. 
\end{notiz}

\begin{notiz}
The condition $\vert \operatorname{RG}(f, S) \vert = m-2$ is necessary but not 
sufficient for $a_e = 2g+1$ as the numerical semigroup $S = \langle 7, 11, 16, 17, 19 
\rangle$ shows: It has genus $13$, so $a_e < 2g+1$ but has the $m-2=5$ reflected 
gaps $5$, $8$, $10$, $12$ and $15$. 
\end{notiz}

\section{Pseudo-Frobenius elements and the canonical ideal}\label{sect:PF}

Let again $S = \langle a_1, \ldots, a_e\rangle$ be 
a numerical semigroup with $a_1 < \cdots < a_e$, 
genus $g = g(S)$, $a_e=2g+1$ and assume $m=a_1 \geq 2$.

\begin{lemma}\label{lem:pseudofrob}
The set of Pseudo--Frobenius elements of $S$ is given by 
	\[
	\operatorname{PF}(S) = \{a_e-a_1, \dots, a_e - a_{e-1} \}
	\]
\end{lemma}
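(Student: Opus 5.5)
Since $\operatorname{PF}(S)$ is determined by the Apéry set, the plan is to use the standard description: $p \in \operatorname{PF}(S)$ if and only if $p+m$ is maximal in $\operatorname{Ap}(S)$ with respect to the order $x \leq_S y \iff y - x \in S$. I will prove the two inclusions directly from the definition and Proposition~\ref{prop:ae_maximal}, without appealing to the Apéry set.

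\emph{Step 1: each $a_e - a_i$ ($1 \le i \le e-1$) is pseudo-Frobenius.} First, $a_e - a_i \notin S$: otherwise $a_e = a_i + (a_e - a_i)$ would lie in $S_+ + S_+$, since $a_e - a_i > 0$, contradicting minimality of $a_e$. Next, let $s \in S_+$. We must show $a_e - a_i + s \in S$. If $a_e - a_i + s \geq 2g+1$, then, as all gaps lie in $\{1,\dots,2g\}$ (Proposition~\ref{prop:ae_maximal}), it is in $S$. Otherwise $n := a_e - a_i + s \le 2g$. Suppose $n \notin S$. By the bijection $\varphi$ of Proposition~\ref{prop:ae_maximal}, $n \notin S$ with $1\le n\le 2g$ implies $2g+1-n \in S$. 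But $2g+1-n = a_i - s$, so $a_i = s + (a_i - s)$. Here $a_i - s \in S$ and is positive since $n<2g+1$. Thus $a_i \in S_+ + S_+$, contradicting minimality of $a_i$. Hence $n \in S$.

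\emph{Step 2: every $p \in \operatorname{PF}(S)$ has this form.} Take $p \in \operatorname{PF}(S)$. Then $p \le f \le 2g$ and $p \notin S$, so $q := 2g+1-p = a_e - p \in S$ by Proposition~\ref{prop:ae_maximal}, and $q > 0$. Write $q$ as a sum of minimal generators and pick one summand $a_i$, so that $q = a_i + s$ with $s \in S$. If $s \in S_+$, then $p + s \in S$ since $p$ is pseudo-Frobenius. Then $a_e = p + q = (p+s) + a_i$ lies in $S_+ + S_+$, a contradiction. Hence $s = 0$ and $q = a_i$, i.e.\ $p = a_e - a_i$. Finally $i \neq e$ because $p > 0$.

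The main subtlety is in Step~1: one must handle the case $a_e - a_i + s$ beyond $2g$ and verify that $a_i - s$ is a positive element, so that the minimality contradiction is legitimate. The rest is bookkeeping with $\varphi$.
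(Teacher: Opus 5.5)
Your proof is correct and is essentially the paper's argument. The paper writes each gap as $a_e - s$ with $s \in S$, $s < a_e$, and uses the reflection $n \mapsto 2g+1-n$ of Proposition~\ref{prop:ae_maximal} to turn the pseudo-Frobenius condition into ``$s$ is a minimal generator'' in a single chain of equivalences; you split this into the two inclusions and make the boundary cases explicit. The one unstated point is $p \geq 1$ in Step~2, which the paper also leaves implicit; it holds because $m \geq 2$ rules out negative pseudo-Frobenius elements.
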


\begin{proof}
By Proposition~\ref{prop:ae_maximal} every (positive) gap of $S$ is of the 
form $a_e-s$ for an element $s$ in $S$ with $s < a_e$. Such an element 
$a_e-s$ is a Pseudo-Frobenius element  if and only if  $(a_e-s)+t \in S$ 
for all $t \in S_+$, i.e. if and only if $a_e-(s-t) \in S$ for all $t \in S_+$.  
Using Proposition~\ref{prop:ae_maximal} again this is equivalent to $s-t \notin S$ 
for all $0<t<s$, $t \in S_+$, i.e. $s$ a minimal generator of $S$. 
\end{proof}

\begin{corollary}\label{cor:type_S}
A numerical semigroup $S = \langle a_1, \ldots, a_e\rangle$ with $a_e = 2g+1$ has type 
	$$ t(S) = e(S) - 1 $$
\end{corollary}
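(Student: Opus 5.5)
The corollary follows from Lemma~\ref{lem:pseudofrob}, since the type $t(S)$ is by definition the cardinality of $\operatorname{PF}(S)$. By that lemma,
$$ \operatorname{PF}(S) = \{a_e - a_1, \ldots, a_e - a_{e-1}\}, $$
so it only remains to check that these $e-1$ listed elements are pairwise distinct.

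Distinctness is immediate from the standing convention $a_1 < a_2 < \cdots < a_e$: the map $a_i \mapsto a_e - a_i$ is strictly decreasing, hence injective. The elements $a_e - a_i$ for $i = 1, \ldots, e-1$ are therefore $e-1$ different integers, giving $t(S) = \vert \operatorname{PF}(S) \vert = e-1$.

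There is no real obstacle, but one degenerate case deserves a sanity check. If $e = 2$, then $m = 2$ by Proposition~\ref{prop:an_alm_symm}, so $S = \langle 2, 2g+1 \rangle$. Its only pseudo-Frobenius element is $F(S) = 2g-1 = a_e - a_1$, so $t(S) = 1 = e-1$, consistent with the formula (and with $S$ being symmetric in the multiplicity-$2$ case).

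Finally, the positivity assumption $m \geq 2$ made at the start of this section ensures $S \neq \N$, so $\operatorname{PF}(S)$ is nonempty and the count is meaningful.
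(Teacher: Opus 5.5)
Your proposal is correct and follows the paper's route exactly: the corollary is read off from Lemma~\ref{lem:pseudofrob} by counting, with the $e-1$ elements $a_e-a_i$ distinct because $a_1<\cdots<a_e$. The $e=2$ sanity check is consistent but not needed, since the lemma already covers that case.
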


Recall that for a numerical semigroup $S$ a canonical ideal $K = K(S)$ of $S$ 
is an $S$--ideal $K \subseteq \Z$ such that  
	$$ K - (K-I) = I $$  
for each $S$--ideal $I\subseteq \Z$ (cf.~\cite{Ja} \S 5). 
A canonical ideal always exists and is unique up to 
translation. Hence there is exactly one canonical ideal $K$ 
of $S$ with minimal element $0$. 

\begin{rem}
By~\cite[\S 5]{Ja}, ~\cite[Prop. 3.2]{Ba} 
and~\cite[ex. 2.14]{RGS} for an $S$--ideal $K \subseteq \Z$ 
the following are equivalent: 
\begin{enumerate}
    \item $K$ is a canonical ideal of $S$ with minimal 
    element $0$.
    \item $K$ is a canonical ideal with $S \subseteq K 
    \subseteq \N$.
    \item $K = \{z\in\mathbb Z\,\mid\,F(S)-z\notin S\} $.
    \item The elements $F(S) - p$, where $p$ runs through 
    the Pseudo--Frobenius elements of $S$ form a minimal 
    generating set of $K$. 
\end{enumerate}
\end{rem}

If the numerical semigroup $S$ satisfies the condition $a_e=2g+1$, then there is the following description of its canonical ideal $K$:
\begin{proposition}
Let $S = \langle a_1, \ldots, a_e\rangle$ be a numerical semigroup with $a_e=2g+1$ and let $K$ be its canonical 
ideal with minimal element $0$. Then
\[K=\{s-a_1\,\mid\,s\in S\}\setminus\{-a_1\}\]
and as an $S$--ideal $K$ is generated by the elements
	\[ 0 = a_1-a_1, a_2-a_1, \ldots, a_{e-1}-a_1\]
\end{proposition}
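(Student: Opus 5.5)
We describe $K$ via characterization (3) of the preceding remark, $K = \{z \in \Z \mid f - z \notin S\}$, where $f = F(S) = a_e - a_1 = 2g+1-m$ by Corollary~\ref{cor:an_frob}. The plan is to show, for every integer $z$, that $f - z \notin S$ holds exactly when $z + m \in S$ and $z \neq -m$. Once this equivalence is in hand, the description $K = \{s - a_1 \mid s \in S\} \setminus \{-a_1\}$ follows at once.

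The first step handles the easy ranges of $z$. If $z < 0$, then $f - z > f$, so $f - z \in S$ and $z \notin K$. On the other side, $z+m<m$ with $z \neq -m$ forces $z+m \in \{1,\ldots,m-1\}$, which is not in $S$. So both sides of the equivalence are false for every $z<0$. Next, if $z > f$, then $f - z < 0$, so $z \in K$; also $z + m > f$, so $z + m \in S$. Both sides are true.

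The second step is the core case $0 \le z \le f$, and it is the key point. Put $n = z + m$, so that $m \le n \le a_e$ and $f - z = a_e - n$. For $1 \le n \le 2g$ we use the bijection $\varphi$ of Proposition~\ref{prop:ae_maximal}. Since every gap lies in $\{1, \ldots, 2g\}$, that proposition says exactly one of $n$ and $2g+1-n$ lies in $S$. Hence $a_e - n \notin S$ if and only if $n \in S$. The boundary case $n = a_e$, i.e.\ $z = f$, is checked directly: $f - z = 0 \in S$, and correspondingly $n=a_e$ would need separate care. In fact $z=f$ gives $f-z=0\in S$, so $f\notin K$; but $f+m=a_e\in S$. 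So the formula as literally stated seems to require $f \notin \{s-a_1\}$, which fails. I expect this to be the main obstacle: either the statement implicitly excludes $a_e - a_1$ as well, or I have misread it. I would re-check against Remark (3), and if needed adjust the description to remove both $-a_1$ and $a_e - a_1 = f$. This matches the fact that $S' = S \setminus \{a_e\}$ is symmetric, so that $K$ should be related to $S'$ shifted by $-a_1$.

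Finally, for the generators we use Remark (4): $K$ is minimally generated by $f - p$ for $p \in \operatorname{PF}(S)$. By Lemma~\ref{lem:pseudofrob}, $\operatorname{PF}(S) = \{a_e - a_i \mid 1 \le i \le e-1\}$. Hence
$$ f - p = (a_e - a_1) - (a_e - a_i) = a_i - a_1, \qquad 1 \le i \le e-1, $$
which gives the listed generators $0, a_2 - a_1, \ldots, a_{e-1} - a_1$. Note that these generate exactly $S'$ shifted by $-a_1$ (using Proposition~\ref{prop:an_alm_symm}), which supports the corrected set description and serves as a consistency check for the first part.
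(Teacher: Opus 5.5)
Your argument is correct and follows the paper's route. The paper gets the set description from the symmetry of $S'=S\setminus\{a_e\}$, which carries the same information as the bijection of Proposition~\ref{prop:ae_maximal} that you use. It gets the generators from Lemma~\ref{lem:pseudofrob} and characterization (4), exactly as in your last paragraph.

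You have not misread the statement: as printed it is false. $F(S)=a_e-a_1$ never lies in $K$, since $F(S)-F(S)=0\in S$, yet it lies in $\{s-a_1\mid s\in S\}$ via $s=a_e$. For instance, for $S=\langle 3,5,7\rangle$ one gets $K=\{0,2,3\}\cup\{5,6,\ldots\}$, which omits $4=7-3$.

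The symmetry argument the paper invokes actually yields
\[ K=\{s-a_1\mid s\in S'\}\setminus\{-a_1\}=\{s-a_1\mid s\in S\}\setminus\{-a_1,\,a_e-a_1\}. \]
Indeed, for $n=z+a_1\neq 0$ one has $a_e-n\notin S$ iff $a_e-n\notin S'$ iff $n\in S'$, using $F(S')=a_e$. This is exactly your corrected description, so you should state it outright rather than hedge. The generator part of the statement is correct as printed.

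Two minor slips:
\begin{itemize}
\item For $z<-m$, the integer $z+m$ is negative rather than in $\{1,\ldots,m-1\}$. It is still not in $S$, so the conclusion stands.
\item The elements $a_i-a_1$ with $i<e$ generate $(S'\setminus\{0\})-a_1$, not all of $S'-a_1$.
\end{itemize}
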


\begin{proof}
The claim follows from the symmetry of the numerical semigroup 
$S'=S\setminus\{a_e\}$ in Proposition~\ref{prop:an_alm_symm} 
and the explicit calculation of the Pseudo--Frobenius elements in  Lemma~\ref{lem:pseudofrob}.
\end{proof}

\section{Estimating the number of gaps and Wilf's inequality}\label{sect:Wilf}

For numerical semigroups $S$ with $a_e = 2g+1$ the answer to 
Wilf's question is positive.

\begin{satz}[Wilf's inequality]\label{thm:an_wilf}
Let $S = \langle a_1, \ldots, a_e\rangle$ be a numerical semigroup with 
$a_e = 2g+1$, genus $g(S)$ and Frobenius element $F(S)$. Then 
	\begin{equation}\label{eq:wilf} 
	\frac {g(S)}{F(S)+1} \leq \frac {e-1}{e} 
	\end{equation}
\end{satz}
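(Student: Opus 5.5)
The plan is to reduce Wilf's inequality to a counting statement about the Apéry set, which we can then check directly. Write $\sigma = \sigma(S)$ for the number of elements of $S$ in $[0, F(S))$. Since $F(S)+1 = g + \sigma$, inequality~\eqref{eq:wilf} is equivalent to $g \leq (e-1)\sigma$.

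First we compute $\sigma$. By Corollary~\ref{cor:an_frob}, $f = F(S) = 2g+1-m$. The elements of $S$ in $\{1,\dots,2g\}$ are of two kinds: the $\sigma-1$ nonzero elements below $f$, and the $2g-f = m-1$ integers in $(f, 2g]$. By Proposition~\ref{prop:ae_maximal} these are in bijection with the $g$ gaps, so $g = \sigma + m - 2$, that is, $\sigma = g+2-m$.

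The case $e=2$ is immediate. There Proposition~\ref{prop:an_alm_symm} gives $m=2$, hence $\sigma = g$, and $g \le (e-1)\sigma$ holds with equality. So assume $e \geq 3$. Substituting $g = \sigma+m-2$, the desired inequality $g \leq (e-1)\sigma$ becomes
\[ m-2 \leq (e-2)\,\sigma . \]

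To prove this, I would count $\operatorname{Ap}(S)\setminus\{0,f+m\}$, which has exactly $m-2$ elements and contains no $a_1 = m$ and no $a_e = f+m$ (Corollary~\ref{cor:an_frob}). Take $w$ in this set. Then $w \neq 0$ and $w - m \notin S$, so any expression of $w$ as a sum of minimal generators cannot involve $a_1$. It also cannot involve $a_e$: every $w \in \operatorname{Ap}(S)$ satisfies $w \leq f+m$, and $w \neq f+m$, so $w < a_e$. Hence $w = a_i + s$ with $2 \leq i \leq e-1$ and $s \in S$. Moreover $s = w - a_i < f+m - a_i < f$, because $a_i > m$. So $s$ is one of the $\sigma$ elements of $S$ in $[0,f)$.

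This gives a surjection from a subset of $\{2,\dots,e-1\} \times (S \cap [0,f))$ onto $\operatorname{Ap}(S)\setminus\{0,f+m\}$, via $(i,s)\mapsto a_i+s$. Therefore $m-2 \leq (e-2)\sigma$, which finishes the proof.

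The main obstacle is finding the reformulation $m-2 \le (e-2)\sigma$. Once it is in hand, the only delicate points are the strict bounds $w < f+m$ and $a_i > m$, which force $s$ to be strictly below $f$ and hence a genuine sporadic element or $0$.
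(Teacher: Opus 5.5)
Your proof is correct, but it takes a different route from the paper. The paper's proof is two lines. The description of $\operatorname{PF}(S)$ in Lemma~\ref{lem:pseudofrob} gives type $t(S)=e-1$ (Corollary~\ref{cor:type_S}). The general Fr\"oberg--Gottlieb--H\"aggkvist bound $F(S)+1\le (t(S)+1)\,\sigma(S)$, valid for every numerical semigroup, then yields Wilf's inequality at once.

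You bypass both the type and this external theorem. The reflection bijection of Proposition~\ref{prop:ae_maximal} gives the exact count $\sigma=g+2-m$. This turns Wilf's inequality into $m-2\le (e-2)\sigma$, which is equivalent to $(m-2)(e-1)\le (e-2)g$, the form \eqref{eq:wilf3} that the paper only reaches in the remark after the theorem. You then prove it by covering the $m-2$ Ap\'ery elements other than $0$ and $f+m$ with pairs $(a_i,s)$, where $2\le i\le e-1$, $s\in S$ and $s<f$. The standard Ap\'ery count gives only $m-1\le (e-1)\sigma$ for an arbitrary numerical semigroup. Your improvement comes precisely from $a_e=f+m$ being both an Ap\'ery element and a minimal generator, so one Ap\'ery element and one generator drop out of the count.

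The paper's argument is shorter and shows that what really drives the result is $t(S)\le e(S)-1$, so it applies verbatim to any semigroup with that property. Yours is self-contained and elementary and needs only the results of Section~\ref{sect:basic}. It is close in spirit to the alternative argument in the paper's remark, which proves \eqref{eq:wilf3} by passing to the symmetric semigroup $S'$ and counting gaps and generators below $2m$. The difference is that you work directly in $S$ through its Ap\'ery set.
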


\begin{proof}
As $S$ has type $t(S) = e-1$, we get from~\cite[Theorem 20]{FGH} that 
	$$ e \cdot (F(S)+1-g(S)) \geq F(S) +1 $$
hence the claim. 
\end{proof}

\begin{rem}
In the situation of theorem~\ref{thm:an_wilf}, we have 
by corollary~\ref{cor:an_frob}, 
	$$ F(S) = 2\cdot g(S)+1-m(S) $$ 
hence~\eqref{eq:wilf} becomes 
 	\begin{equation}\label{eq:wilf2} 
	\frac {g(S)}{2 \cdot g(S)+2-m(S)} \leq \frac {e-1}{e} 
	\end{equation}
which is equivalent to 
 	\begin{equation}\label{eq:wilf2.1} 
	e \cdot g(S) \leq 2 \cdot e \cdot g(S) + 2 \cdot e - e \cdot m(S) - 
	2 \cdot g(S) - 2 + m(S)
	\end{equation}
hence to
    	\begin{equation}\label{eq:wilf2.2} 
	0 \leq (e-2) \cdot g(S) - (e-1) \cdot (m(S) - 2)
	\end{equation}
i.e. to
 	\begin{equation}\label{eq:wilf3} 
	(m(S)-2) \cdot (e-1) \leq  (e-2) \cdot g(S)
	\end{equation}
resp., if $e > 2$, to 
 	\begin{equation}\label{eq:wilf4} 
	g(S) \geq  (m(S)-2) \cdot \frac {e-1}{e-2} 
	\end{equation}
Now let $S' = S \setminus \{a_e\}$ be the symmetric semigroup associated with $S$. 
Then, excluding the case $m(S) = 2$, by Proposition~\ref{prop:an_alm_symm} 
we have $m(S') = m(S)$ and $e(S') = e-1$, 
and obviously $g(S') = g(S) +1$. Thus equation~\eqref{eq:wilf4} is equivalent to 
 	\begin{equation}\label{eq:wilf5} 
	g(S') \geq  1 +  (m(S')-2) \cdot \frac {e(S')}{e(S')-1} 
	\end{equation} 
for any symmetric semigroup $S' \neq \N$ (the case $m(S') = 2$ being obvious). 

Estimate~\eqref{eq:wilf5} is actually valid for any numerical semigroup $T$ 
satisfying the inequality $F(T) > m(T) \geq 2$ . 

In fact for any such $T$, all integers  $n$ with 
$1 \leq n \leq 2 \cdot m(T)-1$ are either gaps 
or minimal generators, thus 	
	$$ e(T) + g(T) \geq 2 \cdot m(T) - 1 $$
We first treat the case $m(T) > e(T)$. Then 
	$$ \begin{aligned}
	(2 \cdot m(T) - e(T) - 2)  \cdot (e(T)-1) &\,- (m(T) -2) \cdot e(T)  \\
	& = (e(T) -2) \cdot (m(T)-e(T) - 1) \\
	& \geq 0 
	\end{aligned} $$
and we conclude 
	$$ (2 \cdot m(T) - e(T) - 2)  \cdot (e(T)-1) \geq  (m(T) -2) \cdot e(T) $$
hence 
	\begin{equation}\label{eq:wilf6}  
	g(T) \geq 2 \cdot m(T) - e(T) - 1 \geq 1 + (m(T)-2) \cdot \frac {e(T)}{e(T)-1} 
	\end{equation}
In case $m(T) = e(T)$ we have 
	$$ \begin{array} {l c l} 
	m(T) \cdot (e(T)-1) & = & m(T) \cdot (m(T)-1) \\
	& > &  m(T)^2- m(T) - 1 \\
	& = & (m(T)-1) + (m(T)-2) \cdot m(T) \\
	& = & (e(T)-1) + (m(T)-2) \cdot e(T) \\
	\end{array} $$
and thus, as $F(T) > m(T)$, 
	$$ g(T) \geq m(T) > 1 + (m(T)-2) \cdot \frac {e(T)}{e(T)-1} $$
(even with strict inequality). Note however that in the remaining case $F(T) < m(T)$, 
i.e. $T = \langle m, m+1, \ldots, 2m-1 \rangle$, inequality~\eqref{eq:wilf6} does 
not hold for $m \geq 3$.

It is not clear to us whether estimate~\eqref{eq:wilf6} has any relation to 
Wilf's question for nonsymmetric $T$. 
\end{rem}

\section{Closing the largest gap}\label{sect:close_gap} 

A well established technique to study numerical semigroups is deriving them from 
well understood semigroups by closing suitable gaps. In section~\ref{sect:basic} we 
have seen that all numerical semigroups $S=\langle a_1, \ldots, a_e\rangle $ with 
$a_e = 2g+1$ arise from a symmetric semigroup by closing its largest gap which 
provided the basis of most of the results obtained so far. In this section we will 
pick up this theme by closing the largest gap of $S$ (i.e. the two largest gaps of 
a symmetric semigroup). Before doing so, we recall some facts and arguments, 
communicated to us by the late Rolf Waldi, that seem to be well known to experts;  
we include them here for lack of a suitable reference. 

Let $S$ be an arbitrary numerical semigroup. A set $D$ of gaps of $S$ is called 
distinguished if for any gap $a$ of $S$ there exists an element $s \in S$ with 
$a + s \in D$. We note that any distinguished set $D$ of $S$ necessarily contains 
the set $\operatorname{PF}(S)$ of all the Pseudo--Frobenius elements of $S$, and 
that $\operatorname{PF}(S)$ itself is a distinguished set of gaps of $S$. 

\begin{lemma}\label{lem:dist} 
If $D$ is a distinguished set of gaps of $S$, containing $d$ elements, then 
	$$ \frac {g(S)}{F(S)+1} \leq \frac {d}{d+1} $$ 
\end{lemma}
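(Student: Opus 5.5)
We want $d\cdot\sigma(S) \ge g(S)$, where $\sigma(S)$ is the number of sporadic elements of $S$, i.e. elements of $S$ below $F(S)$. Indeed, $g/(F+1)\le d/(d+1)$ is equivalent to $(d+1)g \le d(g+\sigma)$, i.e. $g \le d\sigma$, since $F(S)+1=g(S)+\sigma(S)$. So the plan is to cover the set of gaps by at most $d\cdot\sigma(S)$ objects.

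The approach is an injection (or covering) argument. For each gap $a$, choose an element $s(a)\in S$ with $a+s(a)\in D$; such an $s$ exists by the definition of a distinguished set. Since $a+s(a)\le F(S)$, we get $s(a)\le F(S)-a<F(S)$. Hence $s(a)$ lies in the set $\Sigma=\{s\in S : s<F(S)\}$, which has exactly $\sigma(S)$ elements (including $0$).

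Now consider the map
\[
a\longmapsto \bigl(s(a),\, a+s(a)\bigr)\in \Sigma\times D .
\]
It is injective, because $a$ is recovered as $(a+s(a))-s(a)$. Therefore $g(S)\le |\Sigma|\cdot|D| = \sigma(S)\cdot d$, which gives the claim.

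The only point needing care is the range of $s(a)$. We must ensure $s(a)$ is strictly smaller than $F(S)$, so that it is counted among the $\sigma(S)$ sporadic elements. This follows because $a\ge 1$ and $a+s(a)$ is a gap, hence at most $F(S)$. We also need $0$ to be counted in $\sigma(S)$, consistent with $F(S)+1=\sigma(S)+g(S)$. Both points are checks rather than real obstacles, and the rest is immediate.
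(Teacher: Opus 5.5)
Your proof is correct and is essentially the paper's ``easy direct argument'': the paper observes that every gap lies in $\bigcup_{i=1}^{d}\{g_i-s_1,\ldots,g_i-s_n\}$, where the $s_j$ are the elements of $S$ below $F(S)$ with $0$ included, and this set has at most $d\cdot\sigma(S)$ elements. Your injection $a\mapsto(s(a),a+s(a))$ into $\Sigma\times D$ encodes the same covering. The paper also remarks that the lemma follows immediately from the type bound it cites, since $d\geq t(S)$.
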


\begin{proof}
This is immediate by~\cite[Theorem 20]{FGH}, as $d \geq t$, the type of $S$. 

An easy direct argument goes as follows:  Set $D = \{g_1, \ldots, g_d\}$ and let 
$\{s_1, \ldots, s_n\}$ be the set of sporadic elements of $S$, i.e. the elements 
$s \in S$ with $s < F(S)$. Then by the very 
definition of distinguished, the gaps of $S$ are contained in the set 
	$$ M = \bigcup_{i=1}^d \{g_i-s_1,\ldots, g_i-s_n\} $$
which contains at most $d \cdot n$ elements. Thus 
	$$ \frac {F(S) +1 - g(S)}{g(S)} \geq \frac {n}{n \cdot d} = \frac {1}{d} $$
which is equivalent to the claim. 
\end{proof}

From now on for the remainder of this section let 
$S = \langle a_1, \ldots, a_e \rangle $ be a nontrivial numerical semigroup 
with $a_e = 2g+1$ and let $T = S \cup \{a_e-a_1\} $. As  
$a_e-a_1 = F(S)$ is the largest gap of $S$, this is again a numerical 
semigroup with $F(T) < a_e-a_1$. 

\begin{lemma}\label{lem:lgap_min_gen}
If $a_e > 2 \cdot a_1$, then $T$ is minimally generated by $a_1, \ldots, a_{e-1}, 
a_e-a_1$. In particular $T$ has embedding dimension $e(T) = e$. 
\end{lemma}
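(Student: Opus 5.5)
We need to show three things: the elements $a_1,\ldots,a_{e-1},a_e-a_1$ generate $T$, none of them is redundant, and consequently $e(T)=e$.

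\emph{Generation.} We have $T = S\cup\{a_e-a_1\}$, and $S$ is generated by $a_1,\ldots,a_e$. Since $a_e = a_1 + (a_e-a_1)$, the generator $a_e$ is superfluous once $a_e-a_1$ is available. Hence $T=\langle a_1,\ldots,a_{e-1},a_e-a_1\rangle$. Note that Corollary~\ref{cor:an_frob} gives $a_e-a_1=F(S)$, which is a gap of $S$, so it genuinely is a new element.

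\emph{Minimality of $a_e-a_1$.} Suppose $a_e-a_1$ were a sum of at least two nonzero elements of $T$. Every element of $T$ other than $a_e-a_1$ lies in $S$, and $a_e-a_1$ cannot occur as a summand of itself. So $a_e-a_1$ would lie in $S_++S_+\subseteq S$, contradicting that it is a gap of $S$.

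\emph{Minimality of $a_i$, $i\le e-1$.} Suppose some $a_i$ with $i\le e-1$ is not minimal in $T$. Since the $a_i$ are minimal in $S$, any decomposition $a_i = x+y$ with $x,y\in T_+$ must involve $a_e-a_1$. This forces $a_i \ge a_e-a_1+a_1 = a_e$, because the smallest nonzero element of $T$ is still $a_1$: here $a_e-a_1>a_1$ follows from the hypothesis $a_e>2a_1$. But $a_i<a_e$, a contradiction.

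This is the step where the hypothesis $a_e>2a_1$ enters. Without it, $a_e-a_1$ could be smaller than $a_1$ and would become the new multiplicity, so the bound would fail; indeed $a_1$ itself might then be expressible. With the hypothesis, $m(T)=a_1$ and every sum containing $a_e-a_1$ together with another nonzero element is at least $a_e$.

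\emph{Conclusion.} The $e$ elements $a_1,\ldots,a_{e-1},a_e-a_1$ are pairwise distinct, since $a_e-a_1$ is a gap of $S$ while the $a_i$ lie in $S$. Hence they form the minimal generating system of $T$, and $e(T)=e$.

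There is no real obstacle. The only point needing care is checking that $a_e-a_1$ is not the multiplicity of $T$, which is exactly what the assumption $a_e>2a_1$ provides.
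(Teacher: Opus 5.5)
Your proof is correct and follows essentially the paper's argument. In both, the hypothesis $a_e>2a_1$ keeps $m(T)=a_1$, so any decomposition in $T$ that uses $a_e-a_1$ together with another nonzero element is at least $a_e$. Hence $a_1,\dots,a_{e-1}$ stay irreducible, while $a_e=a_1+(a_e-a_1)$ becomes reducible.

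Your organization is slightly leaner. You get generation directly from $a_e=a_1+(a_e-a_1)$ and then check irreducibility of each candidate. This avoids the paper's appeal to Lemma~\ref{lem:an_genus} for elements $\ge a_e$, and its split into the ranges below and above $a_e-a_1$.
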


\begin{proof}
First note that 
	$$ T \cap \left(\{0, 1, \ldots, a_e\} \setminus \{a_e-a_1\} \right) = 
	S  \cap \left(\{0, 1, \ldots, a_e\} \setminus \{a_e-a_1\} \right) $$
hence any element $0 < a < a_e-a_1$ is a minimal generator of $S$ if and only if it 
is a minimal generator of $T$. 

The element $a_e - a_1$ is a minimal generator of $T$ as it is a gap of $S$, whereas 
$a_e \in T_+ + T_+$ is not. 

Now let $a_e-a_1 < a < a_e$. Clearly if $a \in S_+ + S_+$, then $a \in T_++T_+$. 
Conversely if $a \in T_+ + T_+$, $a = t_1 + t_2$, then $t_1 \geq a_1$ 
as $a_e > 2 \cdot a_1$, hence $t_2 
< a_e-a_1$ and vice versa, implying that $t_1, t_2 \in S_+$, hence $a \in S_+ + S_+$. 

Finally we note that by Lemma~\ref{lem:an_genus} resp. its proof any $a \in T$ 
with $a \geq a_e = 2g(S) + 1 = 2g(T)+3$ cannot be a minimal generator of $T$, 
completing the proof of the lemma. 
\end{proof}

\begin{satz}[Wilf's inequality]\label{thm:lgap_wilf}
Let $S = \langle a_1, \ldots, a_e \rangle $ be a numerical semigroup 
with $a_e = 2g+1$ and let $T = S \cup \{a_e-a_1\} $. Then $T$ 
satisfies Wilf's inequality 
	\begin{equation}\label{eq:wilf_lgap}
	\frac {g(T)}{F(T)+1} \leq \frac {e(T)-1}{e(T)}
	\end{equation}
\end{satz}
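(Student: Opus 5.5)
The plan is to apply Lemma~\ref{lem:dist} to $T$: I will exhibit a distinguished set of gaps of $T$ with at most $e(T)-1$ elements. Since $d\mapsto d/(d+1)$ is increasing, this gives~\eqref{eq:wilf_lgap}. The case $T=\N$ is trivial and is excluded throughout.

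\textbf{The candidate set.} Every gap of $T$ is a gap of $S$ different from $F(S)=a_e-a_1$. By Proposition~\ref{prop:ae_maximal}, such a gap has the form $a=a_e-s$ with $s\in S$, $0<s<a_e$ and $s\neq a_1$. Write $s=\sum_{i<e} n_i a_i$; note $a_e$ cannot occur since $s<a_e$. I propose
\[ D=\{a_e-a_2,\ldots,a_e-a_{e-1}\}\cup\{a_e-2a_1\}, \]
where the last element is included only if $a_e>2a_1$. Each element of $D$ is a gap of $S$, again by Proposition~\ref{prop:ae_maximal}. None of them equals $a_e-a_1$, so all are gaps of $T$.

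\textbf{$D$ is distinguished.} There are two cases for $s$.
\begin{itemize}
\item If $n_i>0$ for some $2\le i\le e-1$, then $s-a_i\in S\subseteq T$ and $a+(s-a_i)=a_e-a_i\in D$.
\item Otherwise $s=ka_1$ with $k\ge 2$, because $s\ne a_1$. In this case $a_e>ka_1\ge 2a_1$, and $a+(k-2)a_1=a_e-2a_1\in D$.
\end{itemize}
Hence $D$ is distinguished with $d=|D|\le e-1$, and with $d\le e-2$ when $a_e<2a_1$. The equality $a_e=2a_1$ is impossible, since $a_e$ is a minimal generator.

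\textbf{Comparing with $e(T)$.}
\begin{itemize}
\item If $a_e>2a_1$, Lemma~\ref{lem:lgap_min_gen} gives $e(T)=e$, so $d\le e(T)-1$ and we are done.
\item If $a_e<2a_1$, then $d\le e-2$, and it suffices to show $e(T)\ge e-1$.
\end{itemize}

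\textbf{The main obstacle} is this second case. Here $a_e-a_1<a_1$ becomes the new multiplicity of $T$. I would try to argue that $a_2,\ldots,a_{e-1}$ together with $a_e-a_1$ remain minimal generators of $T$. Every $a_i$ lies below $a_e<2a_1$, so the only decompositions in $T$ that could destroy the minimality of some $a_i$ involve $a_e-a_1$, namely $a_i=(a_e-a_1)+t$ with $t\in T_+$. I would use the reflection $n\mapsto a_e-n$ of Proposition~\ref{prop:ae_maximal} to rule this out, or at least to show that at most one $a_i$ is lost, which still leaves $e(T)\ge e-1$.

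If this fails, the fallback is the degenerate situation $S=\langle m,m+2,\ldots,2m+1\rangle$-like, with all nonzero elements below $2a_1$. There one can compute $g(T)$ and $F(T)$ directly and check Wilf's inequality by hand. Alternatively, one can use the bound $e(T)+g(T)\ge 2m(T)-1$ from the Remark after Theorem~\ref{thm:an_wilf}.
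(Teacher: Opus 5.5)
Your treatment of the case $a_e>2a_1$ is exactly the paper's argument: the same set $D$, the same distinguishedness check, then Lemma~\ref{lem:lgap_min_gen} and Lemma~\ref{lem:dist}. The case $a_e<2a_1$, however, is left unproved. There you need $e(T)\ge e-1$, but you only sketch how you might get it, and your primary claim is false. For instance, $S=\langle 3,4,5\rangle$ gives $T=\langle 2,3\rangle$, and $a_{e-1}=4=2(a_e-a_1)$ is no longer a minimal generator. In fact, in this case $a_{e-1}=2(a_e-a_1)$ always becomes decomposable when $m\ge 3$, so the reflection cannot rule this out. Your fallback also misidentifies the degenerate semigroup. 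The semigroup $\langle m,m+2,\ldots,2m+1\rangle$ has $a_e=2m+1>2a_1$, so it belongs to the case you already handled.

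The missing observation is elementary. From $2g+1=a_e<2a_1$ you get $g\le a_1-1$, while $1,\ldots,a_1-1$ are already gaps. Hence $g=a_1-1$, so $S=\langle m,m+1,\ldots,2m-1\rangle$ and $T=\langle m-1,m,\ldots,2m-3\rangle$. Then $g(T)=F(T)=m-2$ and $e(T)=m-1$, and Wilf's inequality holds with equality. This is how the paper disposes of the case.

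Alternatively, you can finish your own route without identifying $S$. The only element of $T_+$ below $a_1$ is $a_e-a_1$. Moreover, $(a_e-a_1)+t\ge a_e$ for every $t\in T_+$ with $t\ge a_1$, and any sum of two elements $\ge a_1$ exceeds $a_e$. Hence among $a_1,\ldots,a_{e-1}$ only the one equal to $2(a_e-a_1)$, if any, can become decomposable in $T$. Adding the new multiplicity $a_e-a_1$ then gives $e(T)\ge e-1\ge d+1$. Note that you must count $a_1$ here, not just $a_2,\ldots,a_{e-1}$.
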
 

\begin{proof}
If $e=1$, then $T=S=\N$, and there is nothing to show. Thus we may assume $e \geq 2$. 
If $a_e < 2 \cdot a_1$, then by Proposition~\ref{prop:an_alm_symm}
	\[S=\langle a_1,a_1+1,\dots,2a_1-1\rangle\]
hence 
	\[T=\langle a_1-1,a_1,\dots,2a_1-3\rangle\]
and~\eqref{eq:wilf_lgap} is true as equality. Hence we may assume $a_e > 2 \cdot a_1$. 
Then $e(T) = e$ by Lemma~\ref{lem:lgap_min_gen}, thus by Lemma~\ref{lem:dist} it 
suffices to show that 
	$$ D = \{a_e - 2 a_1, a_e - a_2, \ldots, a_e-a_{e-1} \} $$
is a distinguished set of gaps of $T$. 

By Lemma~\ref{lem:lgap_min_gen} the elements of $D$ certainly are gaps of $T$. 

If $t$ is a gap of $T$, then it is one of $S$ as well. Thus $a_e-t \in S$ by 
Proposition~\ref{prop:ae_maximal} and $0 < a_e -t < a_e$. Hence 
	$$ a_e -t = \sum\limits_{i=1}^{e-1} n_i \cdot a_i $$
for some $n_i \geq 0$ with $\sum\limits_{i=1}^{e-1} n_i > 0$. If $n_i = 0$ for all 
$i > 1$, then necessarily $n_1\geq 2$ (as otherwise $t = a_e-a_1$ which is not a gap), 
hence $t + (n_1-2) \cdot a_1 = a_e-2 \cdot a_1 \in D$, and if $n_{i_0}>0$ for some 
$i_0 > 1$, then $t + \sum\limits_{i\neq i_0} n_i \cdot a_i + (n_{i_0} - 1) \cdot a_{i_0} 
= a_e - a_{i_0} \in D$, showing that $D$ is a distinguished set of gaps, thus completing 
the proof of the theorem. 
\end{proof}

\begin{rem} 
In the situation of the proof of Theorem~\ref{thm:lgap_wilf}, the set of 
distinguished gaps considered is precisely the set of Pseudo--Frobenius 
elements of $T$: 

As $D$ is distinguished, all Pseudo--Frobenius elements of $T$ are 
contained in $D$. Conversely for any $t \in T_+$ we have 
$a_e - 2 \cdot a_1 + t \geq a_e - a_1 > F(T)$, hence 
$a_e - 2 \cdot a_1 + t \in T_+$. For $i > 1$ we have by 
Lemma~\ref{lem:pseudofrob} that $a_e-a_i$ is a Pseudo--Frobenius 
element of $S$, hence $a_e-a_i + s \in S \subseteq T$ for 
any $s \in S_+$ and $a_e-a_i+ a_e-a_1 \geq a_e-a_1 > F(T)$, 
hence also $a_e-a_i+ a_e-a_1 \in T$.
\end{rem}

\section{Value-Semigroups of Curve Singularities}

Numerical semigroups are closely related to the local rings  
of curves and appear naturally as value semigroups 
of unibranch curves singularities. In~\cite{GP} parametrizations 
$\varphi: P = k[[X_1, \ldots, X_N]] \longrightarrow k[[t]]$ 
(or more generally $\varphi: P = k[[X_1, \ldots, X_N]] 
\longrightarrow k[[t_1]]\times \cdots \times k[[t_r]]$) of  
completions of local rings of curves are studied.  
Two such parametrizations 
$\varphi$ and $\psi$ are called  \textbf{left-equivalent} or 
$\mathbf{\mathcal{L}}$\textbf{-equivalent} if there
exists a $\sigma \in \mathrm{Aut}_k(P)$ such
that $\varphi \circ \sigma^{-1} = \psi$.
A parameterization $\varphi : P = k[[X_1, \ldots, X_N]] 
\longrightarrow  k[[t_1]]\times \cdots \times k[[t_r]]$ is called 
$j$-$\mathcal{L}$-\textbf{determined}
if any other parameterization $\psi : P = k[[X_1, \ldots, X_N]] 
\longrightarrow  k[[t_1]]\times \cdots \times k[[t_r]]$
with $\varphi = \psi \pmod{\overline{\mathfrak m}^{j+1}}$
is $\mathcal{L}$-equivalent to $\varphi$. 

In~\cite[Corollary 12]{GP} it was shown that any parametrization 
$\varphi$ is $4 \cdot \delta(R)-1$-$\mathcal{L}$-determined, 
a bound which was improved in~\cite[Corollary 3.3]{HS}  to 
$2 \cdot \delta(R) + 1 = 2g+1$ for unibranch curve 
singularities, where 
$\delta(R) = \operatorname{lg}\left(\faktor{\overline{R}}{R}\right)$ 
denotes the singularity-degree of $R$ and where $g = g(S)$ 
is the genus of the value semigroup of $S$. This bound is sharp 
as a general bound and value semigroups $S$ of singularities 
attaining this bound satisfy $a_e =2g+1$. Conversely for any 
such $S$ its numerical semigroup ring is an example of a 
singularity attaining this bound. Part of the motivation for 
this paper stems from these studies and the question to 
understand those singularities for which the bound is sharp. 
Understanding these singularities then helps improving the 
bound for other classes. The following result 
generalizes~\cite[Theorem 2.1]{Ng20}, where it is 
shown that a plane curve singularity is 
$(c(R){-}1)$--$\mathcal{L}$--determined, where $c(R)$ is 
the conductor--degree of $R$, for unibranch singularities. 

\begin{satz}\label{cor:gorenstein_det}
If $(R, \mathfrak m, k)$ is a complete reduced and irreducible 
Gorenstein--curve--singularity with multiplicity $m \geq 3$ and 
conductor degree $c(R)$, then any parametrization 
$\varphi: k[[X_1, \ldots, X_n]]\longrightarrow \overline{R}$ is  
$(c(R){-}1)$--$\mathcal{L}$--determined (hence also 
$(c(R){-}1)$--left--right--determined in the sense of~\cite{GP}). 
\end{satz}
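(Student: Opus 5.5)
The plan is to reduce the statement to the combinatorics of the value semigroup $S$ of $R$, and then use Lemma~\ref{lem:sym_max_gen}. Since $R$ is complete, reduced, irreducible and Gorenstein, Kunz's theorem says that its value semigroup $S$ is symmetric. We have $\delta(R)=g(S)$, and the conductor degree is $c(R)=F(S)+1=2g(S)$. In particular $c(R)-1=F(S)=2g-1$, which is exactly two less than the general bound $2\delta(R)+1$ from~\cite[Corollary 3.3]{HS}. The improvement therefore has to come from the fact that, for symmetric semigroups with $m\ge 3$, no minimal generator reaches the range $\{F(S),F(S)+1,F(S)+2\}$.

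The key step is to go back to the mechanism behind~\cite[Corollary 3.3]{HS} rather than its final numerical form. Let $\psi$ be a parametrization with $\psi\equiv\varphi \pmod{\overline{\mathfrak m}^{j+1}}$, and write $\psi(X_i)=\varphi(X_i)+h_i$ with $h_i\in t^{j+1}k[[t]]$. The argument in~\cite{HS} (following~\cite{GP}) builds the automorphism $\sigma$ order by order. At each step, a term of $t$-order $\nu>j$ has to be absorbed by an element of $R$, i.e.\ by a power series in the $\varphi(X_i)$, modulo terms of higher order. Every $t^\nu$ with $\nu\ge c(R)$ lies in the conductor, hence in $R$, and in fact in $\mathfrak m^2$ unless $\nu$ is a minimal generator of $S$. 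The only obstruction to the correction is therefore a minimal generator $\nu$ of $S$ with $\nu> j$. This is exactly why the general bound $2g+1$ is attained precisely when $a_e=2g+1$.

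I would therefore state, and if necessary re-derive from~\cite{HS}, the refined assertion: $\varphi$ is $j$-$\mathcal L$-determined for every
$$ j\ \ge\ \max\{c(R)-1,\ a_e(S)\}, $$
where $a_e(S)$ denotes the largest minimal generator of $S$. With this in hand, the theorem follows from Lemma~\ref{lem:sym_max_gen}. Since $m\ge 3$ and $S$ is symmetric, all minimal generators of $S$ are $<F(S)=c(R)-1$, so the maximum equals $c(R)-1$. Left-right determinacy is weaker than $\mathcal L$-determinacy, so it follows immediately.

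The main obstacle is making the refined statement rigorous. This requires checking the inductive correction step in the window $c(R)\le\nu\le 2g+1$ carefully. For such $\nu$, $t^\nu$ lies in $\mathfrak m\cdot\overline{R}\cap R$, and since $\nu$ is not a minimal generator it can be written as $\sum_i X_i\cdot(\text{element of } \mathfrak m)$ evaluated under $\varphi$. This is what makes the substitution $X_i\mapsto X_i-(\ldots)$ an automorphism congruent to the identity modulo higher order, so that the process converges in the $\mathfrak m$-adic topology. I would first try to verify that the estimates in~\cite{HS} only use the condition ``$\nu\notin\{a_1,\dots,a_e\}$ and $\nu\ge c$'' rather than ``$\nu\ge 2\delta+2$''. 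If that works, the remaining cases follow by the same convergence argument.
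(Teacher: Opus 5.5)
Your proposal is correct. Its skeleton matches the paper's: a determinacy bound of the shape $\max\{c(R)-1,\ \text{top generator invariant}\}$, combined with Lemma~\ref{lem:sym_max_gen} to push the second term below $F(S)=c(R)-1$.

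What differs is where the bound comes from. The paper proves nothing new at this point. It quotes \cite{HS} 2.2, which gives $(d_R-1)$-$\mathcal L$-determinacy with $d_R$ built from $c(R)$ and the last term $b_n$ of the Herzog--Kunz sequence of $R$. In effect $d_R-1=\max\{c(R)-1,b_n\}$; the printed $\min$ must be a $\max$, since otherwise the statement would need no hypothesis at all. The paper then uses \cite[Corollary 2.5]{HHMM} to get $b_n\le a_e$.

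You bypass the Herzog--Kunz sequence and aim directly at the weaker bound with $a_e$ in place of $b_n$. This makes the argument self-contained. The paper's citation yields the sharper invariant $b_n$, which can be smaller than $a_e$ for non-monomial $R$, but that is irrelevant here because $a_e<F(S)$ already.

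The step you flag as the main obstacle needs neither an audit of \cite{HS} nor an order-by-order convergence argument.

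\begin{enumerate}
\item Assume $j+1\ge c(R)$ and $j+1>a_e$. Then every $\nu\ge j+1$ lies in $S_++S_+$.
\item For $f\in t^{j+1}k[[t]]$ of order $\nu$, pick $u,v\in\mathfrak m_R$ with $f-\lambda uv\in t^{\nu+1}k[[t]]$. Repeat until the order reaches $c(R)+2m$.
\item At that point you are done, since $t^{c(R)+2m}k[[t]]=x^2\,t^{c(R)}k[[t]]\subseteq\mathfrak m_R^2$ for any $x\in\mathfrak m_R$ of order $m$. Hence $t^{j+1}k[[t]]\subseteq\mathfrak m_R^2=\varphi(\mathfrak m_P^2)$.
\item So $\psi(X_i)=\varphi(X_i+h_i)$ with $h_i\in\mathfrak m_P^2$. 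The map $\tau\colon X_i\mapsto X_i+h_i$ is an automorphism, and $\psi=\varphi\circ\tau$ in one step.
\end{enumerate}

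One correction: your blanket claim that $t^\nu\in\mathfrak m^2$ for every $\nu\ge c(R)$ that is not a minimal generator is false for non-monomial $R$. For example, take $R=k+k(t^2+t^3)+t^4k[[t]]$, with semigroup $\langle 2,5\rangle$ and $\mathrm{char}\,k\neq 2$. Then $\mathfrak m^2=k(t^2+t^3)^2+t^6k[[t]]$, which does not contain $t^4$. The claim holds only modulo higher-order terms, and holds exactly once no minimal generator lies above $\nu$. That is precisely the range $\nu>a_e$ you actually use, so your argument is unaffected.
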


\begin{proof}
Let $b_1, \ldots, b_n$ be the Herzog--Kunz sequence of $R$ 
(as defined in~\cite{HHMM}) and let $V(R) = 
\langle a_1, \ldots, a_e\rangle$ be the value semigroup of $R$, 
which is symmetric as $R$ is Gorenstein. 
By~\cite[Corollary 2.5]{HHMM} we know $b_n \leq a_e$ and by Lemma~\ref{lem:sym_max_gen}, 
	$$a_e < F(V(R)) <  c(R)$$ 
As by~\cite{HS} 2.2 any parametrization $\varphi$ of $R$ is 
$(d_R{-}1)$--$\mathcal{L}$--determined where
 $d_R =\mathrm{min}\{c(R), b_n+1\}$, 
the claim follows. 
\end{proof}

\begin{rem}
That a curve singularity $R$ is $(c(R){-}1)$--$\mathcal{L}$--determined 
is obviously the best one can hope for. Thus it is somewhat surprising that 
for a symmetric numerical semigroup $S'$ the semigroup ring $k[[S']]$ 
obtains the best possible bound whereas for the semigroup 
$S = S' \cup \{F(S)\}$ obtained from $S'$ by closing its largest gap 
the semigroup ring $k[[S]]$ attains the worst possible bound. 
\end{rem}

\bigbreak

\bibliographystyle{plain}

\bigbreak

\bigbreak
\end{document}